\documentclass[a4paper,12pt]{article}

\usepackage[usenames,dvipsnames]{xcolor}
\usepackage{amssymb}
\usepackage{amsfonts}
\usepackage{tabulary}
\usepackage{amsmath}
\usepackage{textcomp}
\usepackage{setspace}
\usepackage{graphicx}
\usepackage{authblk}
\newtheorem{theorem}{Theorem}
\newtheorem{definition}[theorem]{Definition}

\newtheorem{proposition}[theorem]{Proposition}
\newtheorem{corollary}[theorem]{Corollary}
\newtheorem{remark}[theorem]{Remark}

\newenvironment{proof}[1][Proof]{\noindent\textbf{#1. }}{\ $\square$\bigskip}

\begin{document}

\title{Fundamental group of  compact decent spacetimes with lightlike hypersurface curvature}

\author{Raymond Hounnonkpe \and Siba Kalivogui}
\maketitle

\vskip 0.2truecm

\begin{abstract}
In this paper, we study the fundamental group of  compact decent $4$-dimensional spacetimes with lightlike hypersurface curvature. Such manifolds are foliated by totally geodesic null hypersurfaces. We prove that when the leaves of the foliation are compact then the fundamental group of the spacetime is polycyclic. We also prove that  a non totally vicious $4$-dimensional compact pp-wave spacetime has polycyclic fundamental group.
\end{abstract}

\begingroup
\renewcommand\thefootnote{}
\footnotetext{
\textbf{MSC:} 53C50, 53C80, 53C40.

\textbf{Keywords.} Parallel vector field, decent spacetime, fundamental group, polycyclic group.
}
\endgroup

\section{Introduction}
The notion of fundamental group of a manifold is  one of the important concept in geometry. For instance, in Riemannian geometry, many important results  show that curvature imposes strong restrictions on the algebraic properties of this group (Synge's theorem, Bonnet-Myers's theorem, Preissman's theorem,...). In Lorentzian geometry, curvature assumption also resticts the fundamental group. For instance, in  \cite{Gold}, it is proved that the fundamental group of a compact flat Lorentzian manifold  is virtually polycyclic. 

An important class of Lorentzian manifolds is the class of manifolds with a parallel rank one  lighlike distribution. This class includes Brinkmann spacetimes, pp-waves and plane waves spacetimes which  within in the context of general relativity  occur as  important solutions of the Einstein field equations. 

In the case of plane waves spacetimes,  their fundamental group where investigated in \cite{Zeg}. The aim of this paper is to contribute to the investgation on the properties of the fundamental group of decent spacetimes (see section 2 for the definition) which include  Brinkmann spacetimes, pp-waves and plane waves spacetimes. The organization of the paper is as follows. Section 2 is devoted to some preliminaries on decent spacetimes. In section 3, we prove that the fundamental group of a  compact decent $4$-dimensional spacetimes with lightlike hypersurface curvature  is polycyclic if the codimension one foliation induced by the parallel lighltike distribution has compact leaves (Theorem \ref{theorem pri}). We also deduce that a non totally vicious $4$-dimensional compact pp-wave spacetime has polycyclic fundamental group (corollary \ref{cor pp}).

\section{Preliminaries on decent spacetimes}
In this section, we recall some basics on decent spacetimes (see \cite{Larz}, for details).

Let $(M,g)$ be a  spacetime ( time oriented Lorentzian manifold). We denote by $\nabla$ its Levi-Civita connection.
\begin{definition}
 The spacetime $(M,g)$ is called almost decent if there exists a lightlike vector field $V$ such that 
 \begin{equation}
 \label{decent}
 \nabla_UV = \beta(U)V, \; \forall \; U \in  \Gamma(TM).
 \end{equation}
 \end{definition}
 In this case, the rank one distribution $\mathbb{L}$  generated by $V$ is parallel. The orthogonal distribution $\mathbb{L}^\perp$ ( which cantains $\mathbb{L}$ ) is integrable and so defines a codimension one foliation which we denote by $\mathcal{L}^\perp$. The leaves of $\mathcal{L}^\perp$ are totally geodesic lightlike hypersurfaces of $M$.
 
The spacetime $(M,g)$ is called decent if it almost decent and satisfies  $$Ker \beta = \mathbb{L}^\perp. $$ In this case, it is denoted by $(M, g, V)$.

A decent spacetime is a Brinkmann spacetime if the $1$-form $\beta$ is identically zero ($\beta = 0$), i.e, $V$ is a lighlike parallel vector field.

For a decent spacetime, the $1$-form $\alpha = g(V, .)$ metrically equivalent to $V$ is closed (see \cite[Lemma 2.47]{Larz}). Since $Ker \alpha = \mathbb{L}^\perp$, it follows that the codimension one foliation $\mathcal{L}^\perp$ is defined by a closed $1$-form. If we assume $M$ to be compact then all the leaves are diffeomorphic. Moreoover,  either all the leaves of $\mathcal{L}^\perp$ are compact or they are all dense (\cite[ page 237]{God}).

\begin{definition}
A decent spacetime $(M,g)$ is said to have lightlike hypersurface curvature, if and only if the curvature $R$ satisfies
\begin{equation}
\label{hyper curv}
R(X,Y)Z  \in \Gamma (\mathbb{L}), \; \forall \; X, Y, Z \in \Gamma (\mathbb{L}^\perp).
\end{equation}
\end{definition}

 A spacetime $(M,g)$ is called a pp-wave if it admits a global
parallel lightlike vector field and if its curvature tensor $R$ satisfies
$$R(X,Y) = 0, \; \forall \; X, Y \in \Gamma (\mathbb{L}^\perp).$$

It follows from the above definitions that a pp-wave spacetime is a decent spacetime with lightlike hypersurface curvature.

The following proposition will be uselful fors us (see \cite[Corollary 2.50]{Larz}).
\begin{proposition}
\label{prop decent imp}
Let $(M,g, V)$ be a compact decent spacetime. If the foliation $\mathcal{L}^\perp$ has compact leaves and $F$ is one of its leaves, then the inclusion $F \rightarrow M$ induces a monomorphism $\pi_1(F) \rightarrow \pi_1(M)$ onto a normal subgroup $H$ such  that $\pi_1(M)/ \pi_1(F) = \mathbb{Z}$.
\end{proposition}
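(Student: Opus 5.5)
The statement is purely topological. I will prove it by showing that $M$ fibres over the circle with fibre $F$, and then reading off the conclusion from the long exact homotopy sequence of that fibration.

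\textbf{Step 1: a transversal flow preserving the leaves.} Since $V$ is lightlike and nowhere zero, the closed $1$-form $\alpha = g(V,\cdot)$ vanishes nowhere, and its kernel is $\mathbb{L}^\perp$. Choose a vector field $W$ with $\alpha(W)=1$; for instance, take a null vector field transverse to $\mathbb{L}^\perp$ and normalise it so that $g(V,W)=1$. By Cartan's formula,
$$\mathcal{L}_W\alpha = d(\alpha(W)) + i_W d\alpha = 0 .$$
Hence the flow $\varphi_t$ of $W$, which is complete because $M$ is compact, preserves $\alpha$. Consequently $\varphi_t$ maps leaves of $\mathcal{L}^\perp$ to leaves of $\mathcal{L}^\perp$, and it moves points at unit speed in the $\alpha$-direction.

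\textbf{Step 2: the leaf space is a circle.} Fix a compact leaf $F$ and consider the map $\Phi\colon F\times\mathbb{R}\to M$, $\Phi(x,t)=\varphi_t(x)$. This is a local diffeomorphism, since $W$ is transverse to the leaves. By compactness of $F$, for small $\varepsilon$ it restricts to an embedding of $F\times(-\varepsilon,\varepsilon)$ onto a saturated open neighbourhood of $F$; this is the step that genuinely uses compactness of the leaves. Now set
$$T=\{t\in\mathbb{R} : \varphi_t(F)=F\}.$$
This is a subgroup of $\mathbb{R}$, and the product neighbourhood shows $T\cap(-\varepsilon,\varepsilon)=\{0\}$, so $T$ is discrete. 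The image of $\Phi$ is open because $\Phi$ is a local diffeomorphism. It is also closed: a limit point lies in some compact leaf, whose product neighbourhood meets $\Phi(F\times\mathbb{R})$. As $M$ is connected, $\Phi$ is therefore surjective. It follows that $\Phi$ descends to a diffeomorphism
$$F\times_{T}\mathbb{R}\;\cong\; M, \qquad \text{where } T \text{ acts on } F\times\mathbb{R} \text{ by } t\cdot(x,s)=(\varphi_{-t}(x),\,s+t).$$
Since $M$ is compact and $F$ is compact, $T\neq 0$, so $T=c\mathbb{Z}$ for some $c>0$. Thus $M$ is the mapping torus of $\varphi_c|_F$, and the map $\theta\colon M\to\mathbb{R}/c\mathbb{Z}$, $\varphi_t(x)\mapsto t$, is a fibre bundle over $S^1$ whose fibre is the connected leaf $F$.

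\textbf{Step 3: the homotopy exact sequence.} Apply the long exact sequence of the fibration $F\hookrightarrow M\to S^1$:
$$\pi_2(S^1)=0\to\pi_1(F)\to\pi_1(M)\to\pi_1(S^1)=\mathbb{Z}\to\pi_0(F)=0 .$$
Exactness gives that $\pi_1(F)\to\pi_1(M)$ is injective. Its image $H$ is the kernel of a homomorphism, hence a normal subgroup. Finally $\pi_1(M)\to\mathbb{Z}$ is onto, so $\pi_1(M)/H\cong\mathbb{Z}$.

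The main obstacle is Step 2: the discreteness of the return set $T$ and the properness of the flow picture, which is exactly where compactness of $F$ enters. I expect the uniform product neighbourhood of a compact leaf to handle it cleanly. An alternative route would be to show that the period group of $[\alpha]$ is cyclic and integrate $\alpha$ directly to a submersion $M\to\mathbb{R}/\mathrm{Per}(\alpha)$.
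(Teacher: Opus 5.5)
Your argument is correct. The paper gives no proof of its own here: it quotes Corollary 2.50 of Larz. That result rests on the same picture you construct. A nowhere-vanishing closed $1$-form on a compact manifold whose leaves are compact exhibits $M$ as a fibre bundle over $S^1$ with fibre $F$, and the homotopy exact sequence then finishes the proof.

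What your version buys is self-containedness. The only geometric input is closedness of $\alpha=g(V,\cdot)$. The flow of a field $W$ with $\alpha(W)=1$ then gives three things:
\begin{itemize}
\item uniform product neighbourhoods of the compact leaves;
\item discreteness of the return group $T$;
\item an explicit identification of $M$ with the mapping torus of $\varphi_c|_F$.
\end{itemize}
This yields $\pi_1(M)\cong\pi_1(F)\rtimes\mathbb{Z}$, with the action given by the monodromy $(\varphi_c|_F)_*$; the extension splits because $\mathbb{Z}$ is free. That is slightly more than the proposition asks for.

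The cited route, or the alternative you mention of integrating $\alpha$ to a submersion onto $\mathbb{R}/\mathrm{Per}(\alpha)$, is shorter on the page. However, it hides the point where compactness enters.

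In your proof, compactness is used in two places:
\begin{itemize}
\item the product neighbourhood of $F$;
\item the proof that $\Phi(F\times\mathbb{R})$ is closed, which uses compactness of every leaf, not just of $F$.
\end{itemize}
The second use is exactly the hypothesis of the statement, so there is no gap.
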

\begin{remark}
\label{remark commut}
From, \cite[page 45]{God}, the normal subgroup $H$ which we identify with $\pi_1(F)$
contains the subgroup of the commutators.
\end{remark}

\section{Main results}
We recall some definitions from group theory (see \cite{Rose} ).
\begin{definition}
Let $G$ be a group with identity element $e$. We define subgroups $G^{(n)}$ of $G$, one for each non negative integer $n$, recursively by 
$$ G^{(0)} = G$$
and for $n > 0$, 
$$ G^{(n)} = [G^{(n-1)} , G^{(n-1)}] = (G^{(n-1)})'.$$
By definition 
$$ G=G^{(0)} \geq G^{(1)} \geq G^{(2)}\geq ... $$
This descending sequence of subgroups of $G$ is called the derived series of $G$.

A group $G$ is solvable if there exists $n\in \mathbb{N}$ such that $G^{(n)} = \{ e\}.$

Equivalently, $G$ is solvable if there exists a series
$$ H_0 = \{e\} \scalebox{1.3}{$\triangleleft$}\, H_1 \scalebox{1.3}{$\triangleleft$}\, H_2 \scalebox{1.3}{$\triangleleft$}\, ...\scalebox{1.3}{$\triangleleft$}\, H_n = G$$ with abelian factors $H_i/H_{i-1}, (i= 1,2,...,n).$

A group $G$ is said to be polycyclic if it has a series all of whose factors are cyclic.

It follows that every polycyclic group is solvable.
\end{definition}
Polycyclic group are very  important and appear in the study of compact flat spacetimes. For instance, in \cite{Gold}, it is proved that the fundamental group of a compact flat Lorentzian manifold  is virtually polycyclic, i.e, it has a polycyclic subgroup of finite index.

We have the following.
\begin{theorem} (\cite[Theorem7.47 and Page 156]{Rose})\\
\label{polyc}
Let $K$ be a normal subgroup of $G$. 
\begin{enumerate}
\item If $K$ and $G/K$ are solvable then $G$ is solvable.
\item If $K$ and $G/K$ are polycyclic then $G$ is polycyclic.
\end{enumerate}
\end{theorem}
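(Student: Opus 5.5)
This is the standard group-theoretic fact that solvable-by-solvable is solvable and polycyclic-by-polycyclic is polycyclic. The plan is to pull back series from the quotient $G/K$ and splice them onto a series for $K$. Throughout, $\pi : G \to G/K$ denotes the canonical projection, and we use the correspondence between subgroups of $G/K$ and subgroups of $G$ containing $K$.

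For (2), suppose $K$ is polycyclic, with a series
$$\{e\} = K_0 \triangleleft K_1 \triangleleft \cdots \triangleleft K_m = K$$
having cyclic factors. Suppose also that $G/K$ is polycyclic, with a series
$$\{eK\} = Q_0 \triangleleft Q_1 \triangleleft \cdots \triangleleft Q_r = G/K$$
having cyclic factors.

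1. Set $G_j = \pi^{-1}(Q_j)$. Then $G_0 = K$ and $G_r = G$.
2. Normality is preserved under preimages, so $Q_{j-1} \triangleleft Q_j$ gives $G_{j-1} \triangleleft G_j$.
3. By the third isomorphism theorem, $G_j/G_{j-1} \cong (G_j/K)/(G_{j-1}/K) = Q_j/Q_{j-1}$, which is cyclic.
4. Concatenating gives
$$\{e\} = K_0 \triangleleft \cdots \triangleleft K_m = K = G_0 \triangleleft G_1 \triangleleft \cdots \triangleleft G_r = G.$$
Each term is normal in the next, and every factor is cyclic, so $G$ is polycyclic.

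For (1), the argument is the same, with "cyclic" replaced by "abelian" in the characterization of solvability given in the definition above. Alternatively, one can use the derived series directly. Since $\pi(G^{(n)}) = (G/K)^{(n)}$, solvability of $G/K$ gives some $n$ with $G^{(n)} \le K$. Then $G^{(n+k)} \le K^{(k)}$ for all $k$, and this equals $\{e\}$ for $k$ large, since $K$ is solvable.

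The only step needing care is checking that normality and the factor groups transfer correctly through $\pi^{-1}$, which is step 3 above. Everything else is bookkeeping.
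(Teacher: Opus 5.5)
Your proof is correct. Pulling back a cyclic (resp.\ abelian) series of $G/K$ through $\pi$ and splicing it onto one for $K$ is the standard argument, and so is the alternative $G^{(n)} \le K \Rightarrow G^{(n+k)} \le K^{(k)}$; the paper gives no proof of its own and just cites this textbook argument from Rose.
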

Now we state the main theorem of the paper. 

\begin{theorem}
\label{theorem pri}
Let $(M,g, V)$ be a $4$-dimensional compact decent spacetime with lightlike hypersurface curvature. If the leaves of the foliation  $\mathcal{L}^\perp$ are compact then the fundamental group $\pi_1(M)$ is polycyclic.
\end{theorem}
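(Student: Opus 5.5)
By Proposition \ref{prop decent imp} we have an exact sequence $1 \to \pi_1(F) \to \pi_1(M) \to \mathbb{Z} \to 1$, where $F$ is a leaf of $\mathcal{L}^\perp$. Since $\mathbb{Z}$ is polycyclic, Theorem \ref{polyc}(2) reduces the theorem to showing that $\pi_1(F)$ is polycyclic. Here $F$ is a compact $3$-dimensional totally geodesic lightlike hypersurface. The plan is therefore to study the geometry induced on $F$ and to show that $F$ is finitely covered by, or is itself, a manifold whose fundamental group is polycyclic, for instance a flat manifold or a circle bundle over a flat surface.

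\textbf{Geometry of the leaf.} Because $F$ is totally geodesic, the Levi-Civita connection $\nabla$ restricts to a torsion-free connection $\nabla^F$ on $TF=\mathbb{L}^\perp|_F$. The restriction of $g$ is degenerate with radical $\mathbb{L}$. The lightlike hypersurface curvature condition (\ref{hyper curv}) says that the curvature of $\nabla^F$ takes values in $\mathbb{L}$. On $F$ we have $\beta|_{TF}=0$, since $\mathrm{Ker}\,\beta=\mathbb{L}^\perp$. Hence $\nabla^F V=0$, so $V$ is a parallel nonvanishing vector field on $F$. It follows that the quotient bundle $E=TF/\mathbb{L}$ is a rank-$2$ bundle carrying the positive definite metric induced by $g$ and the induced connection $\bar\nabla$. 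By (\ref{hyper curv}) this connection is flat, because $R(X,Y)Z\in\mathbb{L}$ projects to zero in $E$. The connection $\bar\nabla$ is also metric.

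\textbf{Passing to a flat structure.} Locally the flow of $V$ gives a submersion onto a $2$-dimensional space with a flat Riemannian metric. I would use the flatness of $E$ to show that the holonomy of $\bar\nabla$ lies in $O(2)$. I would then argue that $\nabla^F$ itself has unipotent-type holonomy: it preserves the flag $\mathbb{L}\subset TF$, acts trivially on $\mathbb{L}$ (since $V$ is parallel), and acts on the quotient through $O(2)$. This should make the universal cover of $F$, with the lifted structure, into a complete affine manifold modeled on a group of affine transformations of $\mathbb{R}^3$ preserving a parallel line and a degenerate metric. Completeness should come from the compactness of $F$ together with the parallel field $V$, or perhaps from compactness of $M$ applied to null geodesics tangent to $F$. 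Granting completeness, the developing map identifies $\pi_1(F)$ with a discrete cocompact subgroup of the group $\mathbb{R}^2\rtimes O(2)$ extended by translations along $V$. This is a solvable Lie group extended by a compact group. A Bieberbach/Auslander-type argument, or Goldman--Hirsch's result \cite{Gold} in this setting, then gives that $\pi_1(F)$ is virtually polycyclic.

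\textbf{Main obstacle.} The hard step is this last one: completeness of $\nabla^F$, and upgrading ``virtually polycyclic'' to ``polycyclic''. For the second point I would try an alternative route. The orbits of $V$ in $F$ either all close up, giving a Seifert fibration of $F$ over a compact flat $2$-orbifold, or they do not. In the closed-orbit case, $\pi_1(F)$ is an extension of a flat $2$-orbifold group by $\mathbb{Z}$. In the non-closed case, a Molino-type argument for Riemannian flows on $3$-manifolds shows that $F$ is a torus bundle or a nilmanifold/solvmanifold. In each case the $3$-manifold groups that arise are torsion-free and solvable. Torsion-free virtually polycyclic $3$-manifold groups are known to be polycyclic except for a few explicit flat cases, and those I would check directly. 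This case analysis is where I expect most of the work.
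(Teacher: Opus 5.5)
Your reduction to $\pi_1(F)$ via Proposition~\ref{prop decent imp} and Theorem~\ref{polyc} matches the paper. Your leaf analysis is also correct: $\nabla^F V=0$, and the induced connection on $E=TF/\mathbb{L}$ is metric and flat. That is exactly the statement that the flow of $V$ on $F$ is a transversally Euclidean Riemannian flow, which is the structure the paper uses.

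\textbf{First gap: there is no flat affine structure.} The argument breaks when you try to turn this into a flat affine structure on $F$ built from $\nabla^F$. Condition (\ref{hyper curv}) only says that $R^F(X,Y)Z$ lies in $\mathbb{L}$, not that it vanishes. The component $g(R(X,Y)Z,N)$ with $N$ transverse is unconstrained; its vanishing is essentially the pp-wave condition.

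Here is a compact counterexample. Take $T^4$ with coordinates $(u,v,x_1,x_2)$ and $g=2\,du\,dv+2\sin(x_1)\,du\,dx_2+dx_1^2+dx_2^2$. This is a time-orientable Brinkmann metric with $V=\partial_v$, compact leaves $\{u=\mathrm{const}\}$ and flat screen, so (\ref{hyper curv}) holds. Yet $g(R(\partial_1,\partial_2)\partial_1,\partial_u)=\pm\tfrac12\sin x_1\neq 0$, so $\nabla^F$ is not flat. Holonomy in the flag-preserving group does not give a developing map to $\mathbb{R}^3$ without flatness. So the Auslander/Goldman--Hirsch step has nothing to apply to, and the completeness you hoped for was unjustified anyway.

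\textbf{Second gap: the final step is left open.} You do not complete the last step, and ``virtually polycyclic'' is the wrong intermediate target. It does not imply polycyclic for $3$-manifold groups: the binary icosahedral group is $\pi_1$ of the Poincar\'e sphere. You would need solvability or torsion-freeness in any case. Two ingredients close the proof, with no completeness of $\nabla^F$ and no Seifert/Molino case analysis.

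First, use only the transverse structure.
\begin{itemize}
\item A Riemannian flow on a compact manifold is transversally complete. So the developing map $\tilde F\to\mathbb{R}^2$ is a fibration whose fibres are the lifted orbits of $V$. By the homotopy sequence these fibres are connected and simply connected, hence lines.
\item The kernel of the holonomy $\pi_1(F)\to\mathrm{Isom}(\mathbb{R}^2)=\mathbb{R}^2\rtimes O(2)$ preserves each fibre and acts freely and properly on it. Hence the kernel is trivial or $\mathbb{Z}$.
\item The image lies in the solvable group $\mathbb{R}^2\rtimes O(2)$.
\end{itemize}
So $\pi_1(F)$ is solvable; this is what the paper takes from \cite[Proposition 3.3]{GR} and \cite[Proposition 5.7]{Blu}.

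Second, apply the Evans--Moser theorem \cite[Theorem 5.2]{Evans}: a compact $3$-manifold with solvable fundamental group has polycyclic fundamental group.

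Incidentally, there are no ``exceptional flat cases'': every $3$-dimensional Bieberbach group has solvable holonomy and is polycyclic.
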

\begin{proof}
Let $F$ be a leaf of the codimension one foliation  $\mathcal{L}^\perp$. Then $F$ is a compact totally geodesic null hypersurface (see section 2). From \cite{RM, GR}, the lightlike vector field $V$ which is tangent to $F$  defines a Riemannian flow on $F$. From (\ref{hyper curv}),  the sectional curvature of any non degenerate plane $(X, Y) $ with $X, Y,  \in \Gamma (\mathbb{L}^\perp)$ is zero, i.e, $K(X,Y) = 0$.  It follows form \cite[Proposition 3.2]{GR} that the transverse curvature of the Riemannian flow defined by $V$ on $F$ is zero, i.e, the Riemannian flow is transversally euclidean (see \cite[Definition 3.3]{GR} ). From \cite[Proposition 3.3]{GR} (see also \cite[Proposition 5.7]{Blu}), $\pi_1(F)$ is solvable. Now it is proved in \cite[Theorem 5.2]{Evans}, that if the fundamental group of a compact $3$-manifold is solvable then it is polycyclic. Hence $\pi_1(F)$ is polycyclic. From Proposition \ref{prop decent imp}, $\pi_1(M)/ \pi_1(F) = \mathbb{Z}$. It follows that $\pi_1(M)/ \pi_1(F)$ is  polycyclic. Since $\pi_1(F)$  and $\pi_1(M)/ \pi_1(F)$ are polycyclic, we conclude that $\pi_1(M)$ is polycyclic.
\end{proof}

\begin{remark}
 Suppose $(M,g, V)$ is a compact decent manifold (not necessarly with lightlike hypersurface curvature) such that a leaf $F$ of $\mathcal{L}^\perp$ is compact. From Proposition \ref{prop decent imp} and  Remark \ref{remark commut}, if $\pi_1(F)$  is abelian, then the derived subgroup $G'$ is abelian hence $G'' = \{e\}.$ So $G$ is solvable of length at most $2$ hence $G$ is meta-abelian.
\end{remark}

A spacetime $(M,g)$ is totally vicious if there passes a closed timelike curve through every point $p$ of $M$. 
\begin{corollary}
\label{cor pp}
Let $(M,g)$ be a non totally vicious $4$-dimensional compact pp-wave spacetime with parallel lightlike vector field $V$. Then    
its fundamental group $\pi_1(M)$ is polycyclic.
\end{corollary}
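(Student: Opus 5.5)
The plan is to reduce the corollary to Theorem \ref{theorem pri}. Since a pp-wave is a decent spacetime with lightlike hypersurface curvature (take $\beta = 0$, so $\operatorname{Ker}\beta = TM \supseteq \mathbb{L}^\perp$; in the Brinkmann case this is the convention recorded in Section 2), the only hypothesis of Theorem \ref{theorem pri} still to be checked is that the leaves of $\mathcal{L}^\perp$ are compact. By the dichotomy recalled in Section 2 for foliations defined by a closed $1$-form on a compact manifold, the leaves are either all compact or all dense. So it suffices to show that dense leaves force $M$ to be totally vicious. The proof then proceeds by contraposition: assume the leaves are dense, and produce a closed timelike curve through every point.

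The key step uses the closed $1$-form $\alpha = g(V,\cdot)$. For a Brinkmann spacetime $\nabla V = 0$, so $\alpha$ is parallel, hence closed and harmonic. A timelike vector $T$ that is future directed (with $V$ chosen future directed) satisfies $\alpha(T) < 0$. Thus along any future timelike curve, $\alpha$ integrates to something strictly negative. A closed timelike curve $\gamma$ therefore has nonzero period $\int_\gamma \alpha \neq 0$.

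To construct closed timelike curves, I would proceed as follows. Density of the leaves means the period group $\{\int_c \alpha : c \in \pi_1(M)\} \subset \mathbb{R}$ has rank at least $2$, so it is dense in $\mathbb{R}$. Pass to the covering $\hat M$ associated with $\ker(\mathrm{per}_\alpha)$. On $\hat M$ we have $\alpha = du$ for a function $u$, and the deck group acts by translations in $u$ by a dense set of values, including values of both signs. Fix $p \in M$ and a lift $\hat p$. Take a short future timelike curve from $\hat p$; it decreases $u$ by a small amount $\varepsilon$. Then I would choose a deck transformation $\phi$ translating $u$ by approximately $-\varepsilon$ such that $\phi(\hat p)$ lies in the chronological future of $\hat p$. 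Since $I^+(\hat p)$ is open and the orbit of $\hat p$ is dense in the relevant directions, such a $\phi$ should exist. A timelike curve from $\hat p$ to $\phi(\hat p)$ projects to a closed timelike curve through $p$. This would contradict the assumption that $M$ is not totally vicious.

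The main obstacle is this last step: density of the period group does not by itself guarantee that deck translates of $\hat p$ enter the open set $I^+(\hat p)$. I would first try to use the compactness of $M$ directly instead. Take a future-directed timelike vector field $T$ on $M$, which exists by time orientation. Flow along it, and use recurrence for the flow on compact $M$, but only after correcting the flow so that it preserves a measure transverse to the foliation. This makes it possible to return arbitrarily close to $p$ within a small neighbourhood where a short timelike connection closes the loop, since timelike curves can be perturbed to end at nearby points in $I^+$. If the paper instead relies on a known result (e.g.\ from \cite{Larz}) stating that a compact decent spacetime whose leaves of $\mathcal{L}^\perp$ are dense is totally vicious, I would simply cite it. The remaining steps are then immediate from Theorem \ref{theorem pri}.
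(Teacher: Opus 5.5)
Your reduction is the same as the paper's. A pp-wave is decent (with $\beta=0$, under the Brinkmann convention of Section 2) and has lightlike hypersurface curvature, so Theorem~\ref{theorem pri} only needs compactness of the leaves of $\mathcal{L}^\perp$. The paper gets exactly this in one line by quoting \cite[Theorem 3.4]{AGR} (see also \cite{Ray}). If you take your fallback and cite that result, your proof coincides with the paper's. However, the argument you actually offer for ``dense leaves $\Rightarrow$ totally vicious'' does not prove it. So, read as a self-contained proof, the proposal has a genuine gap at the only non-formal step.

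\textbf{The deck-transformation argument.} The deck group acts properly discontinuously on $\hat M$, so the orbit of $\hat p$ is discrete. What is dense is only the set of displacements $u(\phi\hat p)-u(\hat p)=\mathrm{per}_\alpha(\phi)$. The sign condition on this displacement is necessary for $\phi\hat p\in I^+(\hat p)$, but far from sufficient. Making the displacement small is in fact the worst choice. In a flat chart $g=2\,du\,dv+|dx|^2$ with $V=\partial_v$, one has $I^+(\hat p)\cap\{u=u_0-s\}=\{v-v_0>|x-x_0|^2/(2s)\}$, which becomes arbitrarily thin in the $x$-directions as $s\to 0$. So ``the orbit of $\hat p$ is dense in the relevant directions'' is false, as you suspected.

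\textbf{The recurrence argument.} An arbitrary timelike flow on a compact manifold has no reason to be recurrent at or near a given $p$. What would suffice is a future timelike vector field $T$ whose flow $\varphi_t$ preserves a finite measure charging every open set. Then choose a ball $B\ni p$ with $\varphi_\delta(B)\subset I^+(p)$ and $\varphi_{-\delta}(B)\subset I^-(p)$. Poincar\'e recurrence gives $x\in B$ and $t>2\delta$ with $\varphi_t(x)\in B$, and then $p\ll\varphi_\delta(x)\ll\varphi_{t-\delta}(x)\ll p$. You never construct such a $T$. ``Preserving a measure transverse to the foliation'' is a different and weaker property: the foliation already carries the invariant transverse measure $|\alpha|$, and this yields no invariant measure on $M$. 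Your sketch also never says where the density of the leaves enters, although that is the hypothesis this step is supposed to use.

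To close the gap, either produce such a measure-preserving timelike flow (or another mechanism) from the pp-wave structure and the density of the leaves, or do what the paper does and invoke \cite[Theorem 3.4]{AGR}.
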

\begin{proof}
From \cite[Theorem 3.4]{AGR} (see also \cite[proof of Theorem 4]{Ray} ), the leaves of the foliation  $\mathcal{L}^\perp$ are compact. The conclusion follows directly from Theorem \ref{theorem pri}.
\end{proof}

\bibliographystyle{amsplain}

\bigskip

\noindent\textbf{Raymond Hounnonkpe}\\
Institut Ouest Africain de Mathématiques, Université Gamal Abdel Nasser de Conakry, Guinée.\\
E-mail: \texttt{adivignon.hounnonkpe@ioam.uganc.edu.gn}\\
E-mail: \texttt{rhounnonkpe@ymail.com}

\medskip

\noindent\textbf{Siba Kalivogui}\\
Université de Kindia, Guinée.\\
E-mail: \texttt{sibakalivogui66@gmail.com }

\end{document}